\def\Speaker{$^{*}$\protect\footnotetext{ \lowercase{}}}
\def\authorsaddresses#1{\dedicatory{#1}}
\newtheorem{theorem}{Theorem}[section]
\newtheorem{lemma}[theorem]{Lemma}
\newtheorem{corollary}[theorem]{Corollary}
\theoremstyle{definition}
\theoremstyle{remark}
\numberwithin{equation}{section}
\begin{document}

%-----------------------------------------------------------------------------------------------------------------

\title[Laplacian System ]{}{\textbf{ \\\begin{center}a Laplacian System With Sign-Changing Weight Function\end{center}}

\author[ S. S. Kazemipoor AND H. Ebrahimi ] { Seyyed Sadegh Kazemipoor\Speaker And Hadiseh Ebrahimi}
\authorsaddresses{Department of Mathematical Analysis, Charles University, Prague, Czech Republic.\\
}
\thanks{\subjclass MMSC[2010] : {35D30,35j50,35J20}\\ { Keywords : Palais-Smale, Laplacian system, Variational methods, Nehari manifold, Sign-changing weight functions.}}

\maketitle
%-------------------------------------------------------------------------------
\begin{abstract}
We prove the existence of at least one positive solution for the
Laplacian system
$$\left\{\begin{array}{ll}
-\Delta u=\lambda a(x)|u|^{q-2}u+\beta
\frac{\alpha}{\alpha+\beta}b(x)|u|^{\alpha-2}u|v|^{\beta}&$for~$x\in\Omega$$,  \\
-\Delta v=\lambda a(x)|v|^{q-2}v+\beta\frac{\beta}{\alpha+\beta}b(x)|u|^{\alpha}|v|^{\beta-2}v&$for~$x\in\Omega$$,\qquad(E_{\lambda}) \\
u=v=0 &$for~$x\in\partial\Omega.$$
\end{array}\right.$$
On a bounded region $\Omega$ by using the Nehari manifold and the
fibering maps associated with the Euler functional for the system.
\end{abstract}
\maketitle

%%% ---------------------------------------------------------------------

\section{\noindent\bf Introduction}~ ~In this paper, we study the multiplicity of positive solutions
for the following elliptic system:
\[\left\{\begin{array}{ll}
-div(|\nabla u|^{p-2}\nabla u)=\lambda f(x)|u|^{q-2}u+\frac{r}{r+s}h(x)|u|^{r-2}u|v|^{s} & $in~$\Omega$$,  \\
-div(|\nabla v|^{p-2}\nabla v)=\mu g(x)|v|^{q-2}v+\frac{s}{r+s}h(x)|u|^{r}|v|^{s-2}v & $in~$\Omega$$, \\
u=v=0 &$on~$\partial\Omega.$$
\end{array}\right.\hspace{1cm}(E_{\lambda,\mu})\]
\\
and We shall discuss the
existence of positive solutions of the Laplacian system
\[\left\{\begin{array}{lll}
-\Delta u=\lambda a(x)|u|^{q-2}u+\beta\frac{\alpha}{\alpha+\beta}b(x)|u|^{\alpha-2}u|v|^{\beta}&$for~$x\in\Omega$$,  \\
-\Delta v=\lambda a(x)|v|^{q-2}v+\beta\frac{\beta}{\alpha+\beta}b(x)|u|^{\alpha}|v|^{\beta-2}v&$for~$x\in\Omega$$,\qquad(E_{\lambda}) \\
u=v=0 &$for~$x\in\partial\Omega$$
\end{array}\right.\]
\\
and
$$J_{\lambda}(u^{+},v^{+})=\displaystyle\inf_{(u,v)\in
M_{\lambda}^{+}(\Omega)}J_{\lambda}(u,v)$$ and
$$J_{\lambda}(u^{-},v^{-})=\displaystyle\inf_{(u,v)\in
M_{\lambda}^{-}(\Omega)}J_{\lambda}(u,v).$$ Moreover
$J_{\lambda}(u^{\pm},v^{\pm})=J_{\lambda}(|u^{\pm}|,|v^{\pm}|)$
and $(|u^{\pm}|,|v^{\pm}|)\in M^{\pm}_{\lambda}(\Omega)$ and so
we may assume $u^{\pm}\geq0$ and $v^{\pm}\geq0$. By Lemma 2.2
$(u^{\pm},v^{\pm})$ are critical points of $J_{\lambda}$ on $W$
and hence are weak solutions (and so by standard regularity
results classical solutions) of $(E_{\lambda})$. Finally, by the
Harnack inequality due to Trudinger \cite{21}, we obtain that
$(u^{\pm},v^{\pm})$ are positive solutions of $(E_{\lambda})$
Where $\Omega$ is a bounded region with smooth boundary in $\Bbb
R^{N}$.
Where $r>p,s>p,$~~$1<q<p<r+s<p^{*}$($p^{*}=\frac{pN}{N-p}$ if
$N>p,\quad p^{*}=\infty$ if $N\leq p$), $\Omega\subset\Bbb R^{N}$
is a bounded domain, the pair of parameters $(\lambda,\mu)\in\Bbb
R^{2}-\{(0,0)\}$, and the weight functions $f,g,h\in
C(\overline{\Omega})$ are satisfying $f^{\pm}=\max\{\pm
f,0\}\not\equiv0,\quad g^{\pm}=\max\{\pm g,0\}\not\equiv0, \quad
h^{\pm}=\max\{\pm h,0\}\not\equiv0$.\\
When $p=2$. The fact that number of positive solutions of
equation $(E_{\lambda})$ is affected by the nonlinearity terms has
been the focus of a great deal of research in recent years.\\
If the weight functions $f\equiv h\equiv 1$, the authors
Ambrosetti-Brezis-Cerami\cite{1} have investigated equation
$(E_{\lambda})$. They found that there exists $\lambda_{0}>0$
such that equation $(E_{\lambda})$ admits at least two positive
solutions for $\lambda\in(0,\lambda_{0})$, has a positive
solution for $\lambda=\lambda_{0}$ and no positive solution
exsists for $\lambda>\lambda_{0}$. Wu \cite{7} proved that equation
$(E_{\lambda})$ has at least two positive solutions under the
assumptions the weight functions $f$ change sign in
$\overline{\Omega},\quad g\equiv 1$ and $\lambda$ is sufficiently
small. For more general results, were done by de
Figueiredo-Grossez-Ubilla\cite{5}, Wu\cite{8} and Brown-Wu\cite{2}.\\
In this paper, we give a very simple variational proof which is
similar to proof of Wu (see \cite{9}) to prove the existence of at
least two positive solutions of system $(E_{\lambda,\mu})$ for
$p\in(1,p^{*})$ and so the system $(E_{\lambda,\mu})$ is similar
to the Wu system \cite{10} ( a semilinear elliptic system involving
sign-changing weight functions). In fact, we use the decomposition
of the Nehari manifold as the pair of parameters $(\lambda,\mu)$
varies to prove that the following result.
%%%%%%%%%%%%%%%%%%%%%%%%%%%%%%%%%%%%%%%%%%%%%%%%%%%%%%%%%%%%%%%%%%%%%%
\begin{theorem} There exists $\lambda_{0}>0$ and
$\mu_{0}>0$ such that for $0<|\lambda|<\lambda_{0}$ and
$0<|\mu|<\mu_{0}$, system $(E_{\lambda,\mu})$ has at least two
positive solutions.
\end{theorem}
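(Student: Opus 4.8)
The plan is to obtain the two solutions as minimizers of the Euler functional
\[
J_{\lambda,\mu}(u,v)=\frac{1}{p}\int_{\Omega}\big(|\nabla u|^{p}+|\nabla v|^{p}\big)\,dx-\frac{\lambda}{q}\int_{\Omega} f|u|^{q}\,dx-\frac{\mu}{q}\int_{\Omega} g|v|^{q}\,dx-\frac{1}{r+s}\int_{\Omega} h|u|^{r}|v|^{s}\,dx
\]
on the product space $W=W_{0}^{1,p}(\Omega)\times W_{0}^{1,p}(\Omega)$, restricted to the two pieces of the Nehari manifold $M_{\lambda,\mu}=\{(u,v)\in W\setminus\{(0,0)\}:\langle J_{\lambda,\mu}'(u,v),(u,v)\rangle=0\}$. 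First I would analyse the fibering map $\phi_{u,v}(t)=J_{\lambda,\mu}(tu,tv)$. Writing $\|(u,v)\|^{p}=\int_{\Omega}(|\nabla u|^{p}+|\nabla v|^{p})$, $B(u,v)=\lambda\int_{\Omega}f|u|^{q}+\mu\int_{\Omega}g|v|^{q}$ and $C(u,v)=\int_{\Omega}h|u|^{r}|v|^{s}$, the map has the concave--convex shape
\[
\phi_{u,v}(t)=\frac{t^{p}}{p}\|(u,v)\|^{p}-\frac{t^{q}}{q}B(u,v)-\frac{t^{r+s}}{r+s}C(u,v),
\]
and since $1<q<p<r+s<p^{*}$ the decomposition $M_{\lambda,\mu}=M^{+}_{\lambda,\mu}\cup M^{0}_{\lambda,\mu}\cup M^{-}_{\lambda,\mu}$ is read off from the sign of $\phi''_{u,v}(1)$ (local minima, degenerate points, and local maxima of the fibre, respectively).

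The first substantive step is to manufacture the thresholds $\lambda_{0},\mu_{0}$. Using the Sobolev embeddings $W_{0}^{1,p}(\Omega)\hookrightarrow L^{q}(\Omega)$ and $W\hookrightarrow L^{r+s}(\Omega)\times L^{r+s}(\Omega)$ together with the constraint equation $\|(u,v)\|^{p}=B(u,v)+C(u,v)$, I would show by a direct estimate that whenever $0<|\lambda|<\lambda_{0}$ and $0<|\mu|<\mu_{0}$ one has $M^{0}_{\lambda,\mu}=\emptyset$, so that $M_{\lambda,\mu}=M^{+}_{\lambda,\mu}\cup M^{-}_{\lambda,\mu}$ with the two pieces disjoint and each closed. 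Substituting the constraint into $J_{\lambda,\mu}$ gives
\[
J_{\lambda,\mu}(u,v)=\Big(\tfrac{1}{p}-\tfrac{1}{r+s}\Big)\|(u,v)\|^{p}-\Big(\tfrac{1}{q}-\tfrac{1}{r+s}\Big)B(u,v)\ge c_{1}\|(u,v)\|^{p}-c_{2}\|(u,v)\|^{q},
\]
and because $q<p$ the functional is coercive and bounded below on $M_{\lambda,\mu}$. Hence the two infima
\[
J_{\lambda,\mu}(u^{+},v^{+})=\inf_{M^{+}_{\lambda,\mu}}J_{\lambda,\mu},\qquad J_{\lambda,\mu}(u^{-},v^{-})=\inf_{M^{-}_{\lambda,\mu}}J_{\lambda,\mu}
\]
are finite; a short computation of $\phi_{u,v}$ near its first critical value shows $\inf_{M^{+}}J_{\lambda,\mu}<0$, while $M^{-}_{\lambda,\mu}$ stays uniformly bounded away from the origin.

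Next I would realise both infima. Applying Ekeland's variational principle on each piece produces minimizing Palais--Smale sequences; since the exponent is subcritical, $r+s<p^{*}$, the embedding $W\hookrightarrow L^{r+s}(\Omega)\times L^{r+s}(\Omega)$ is compact, so after passing to subsequences these converge strongly and the limits $(u^{\pm},v^{\pm})\in M^{\pm}_{\lambda,\mu}$ attain the infima. Because $M^{0}_{\lambda,\mu}=\emptyset$, the Lagrange multiplier forced by the Nehari constraint must vanish, which is precisely the content of Lemma~2.2: each minimizer is a free critical point of $J_{\lambda,\mu}$ on $W$, hence a weak and (by elliptic regularity) classical solution of $(E_{\lambda,\mu})$. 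Replacing $(u^{\pm},v^{\pm})$ by $(|u^{\pm}|,|v^{\pm}|)$ leaves both the energy and the membership in $M^{\pm}_{\lambda,\mu}$ unchanged, so the solutions may be taken nonnegative, and the Harnack inequality of Trudinger upgrades them to strictly positive solutions. The two solutions are distinct because they lie in the disjoint sets $M^{+}_{\lambda,\mu}$ and $M^{-}_{\lambda,\mu}$, and the strict inequality $\inf_{M^{+}}J_{\lambda,\mu}<0$ confirms nontriviality.

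The main obstacle I expect is the compactness on $M^{-}_{\lambda,\mu}$: unlike the minimum on $M^{+}_{\lambda,\mu}$, the infimum there is of mountain-pass type, and the minimizing sequence need not a priori remain separated from the boundary stratum $M^{0}_{\lambda,\mu}$. The bulk of the work is therefore the quantitative estimate -- again powered by the subcritical bound $r+s<p^{*}$ and the sign conditions $f^{\pm},g^{\pm},h^{\pm}\not\equiv0$ -- that simultaneously fixes $\lambda_{0},\mu_{0}$ and guarantees the limit remains inside $M^{-}_{\lambda,\mu}$. Securing that clean separation of $M^{-}_{\lambda,\mu}$ from $M^{0}_{\lambda,\mu}$, rather than the concluding regularity and positivity arguments, is where the real content of the proof lies.
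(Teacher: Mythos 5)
Your proposal follows essentially the same route as the paper: the decomposition $M_{\lambda,\mu}=M^{+}_{\lambda,\mu}\cup M^{0}_{\lambda,\mu}\cup M^{-}_{\lambda,\mu}$ via fibering maps, the smallness condition on $|\lambda|,|\mu|$ forcing $M^{0}_{\lambda,\mu}=\emptyset$, coercivity on the Nehari manifold, Ekeland-type minimizing Palais--Smale sequences on each piece, compactness from the subcritical embedding, the Lagrange-multiplier lemma to pass from constrained minimizers to free critical points, and positivity via $(|u|,|v|)$ plus Trudinger's Harnack inequality. The structure, the key lemmas, and the concluding distinctness argument all match the paper's proof.
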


This paper is organized as follows. In section 2, we give some
notations and preliminaries. In section 3, we establish the
existence of Palais-Smale sequences and we prove the
system  $(E_{\lambda,\mu})$ has at least two positive solutions.

\section{\noindent\bf  Notations and Preliminaries}
~~Throughout this paper, we denote by $S_{l}$ the best Sobolev
constant for the operators $ W=W^{1,p}_{0}(\Omega)\times
W^{1,p}_{0}(\Omega) \hookrightarrow L={L^{l}(\Omega)}\times
{L^{l}(\Omega)}$ is given by
$$S_{l}=\displaystyle\inf_{(u,v)\in W-\{(0,0)\}}\frac{(\int_{\Omega}|\nabla u|^{p}+\int_{\Omega}|\nabla v|^{p})}{(\int_{\Omega}|u|^{l}+\int_{\Omega}|v|^{l})^{\frac{p}{l}}}>0$$
By the above results we know that the existence and multiplicity
of positive solutions of semilinear elliptic problems depend on
the nonlinearity term . Let $S$ be the best Sobolev constant for the embedding of
$H_{0}^{1}(\Omega)$ in
$L^{\alpha+\beta}(\Omega)$. Then we have the following result
Where $1<l\leq p^{*}$. In particular,
$(\int_{\Omega}|u|^{l}+\int_{\Omega}|v|^{l})\leq
S^{-\frac{l}{p}}_{l}||(u,v)||^{l}$ for all $(u,v)\in W$ with the
standard norm $||(u,v)||=(\int_{\Omega}|\nabla
u|^{p}+\int_{\Omega}|\nabla v|^{p})^{\frac{1}{p}}$.\\
System $(E_{\lambda,\mu})$ is posed in the framework of the
Sobolev space $W$. Moreover, a function $(u,v)\in W$ is said to
be a weak solution  of system $(E_{\lambda,\mu})$ if
$$\int_{\Omega}|\nabla u|^{p-2}\nabla
u\nabla\varphi-\lambda\int_{\Omega}f|u|^{q-2}u\varphi-\frac{r}{r+s}\int_{\Omega}h|u|^{r-2}u|v|^{s}\varphi=0$$
for all $\varphi\in W_{0}^{1,p}(\Omega)$ and
$$\int_{\Omega}|\nabla v|^{p-2}\nabla
v\nabla\varphi-\mu\int_{\Omega}g|v|^{q-2}v\varphi-\frac{s}{r+s}\int_{\Omega}h|u|^{r}|v|^{s-2}v\varphi=0$$
for all $\varphi\in W_{0}^{1,p}(\Omega)$. Thus, the corresponding
energy functional of system $(E_{\lambda,\mu})$ is defined by
$$\begin{array}{rcl}
J_{\lambda,\mu}(u,v)&=&\frac{1}{p}||(u,v)||^{p}-\frac{1}{q}(\lambda\int_{\Omega}f|u|^{q}\\
&+&\mu\int_{\Omega}g|v|^{q})-\frac{1}{r+s}(\int_{\Omega}h|u|^{r}|v|^{s})\qquad
for~~(u,v)\in W$$
\end{array}$$
As the energy functional $J_{\lambda,\mu}$ is not bounded below
on $W$, it is useful to consider the functional on the Nehari
manifold $$M_{\lambda,\mu}=\{(u,v)\in W-\{(0,0)\}\mid\langle
J^{'}_{\lambda,\mu}(u,v),(u,v)\rangle=0\}$$ Thus, $(u,v)\in
M_{\lambda,\mu}$ if and only if
$$||(u,v)||^{p}-(\lambda\int_{\Omega}f|u|^{q}+\mu\int_{\Omega}g|v|^{q})-(\int_{\Omega}h|u|^{r}|v|^{s})=0\qquad(1)$$
Define $$\begin{array}{rcl}\psi_{\lambda,\mu}(u,v)=\langle
J^{'}_{\lambda,\mu}(u,v),(u,v)\rangle&=&||(u,v)||^{p}-(\lambda\int_{\Omega}f|u|^{q}+\mu\int_{\Omega}g|v|^{q})\\&-&\int_{\Omega}h|u|^{r}|v|^{s}.\\
\end{array}$$
Then for $(u,v)\in M_{\lambda,\mu}$,
$$\begin{array}{rcl}\langle\psi^{'}_{\lambda,\mu}(u,v),(u,v)\rangle&=&p||(u,v)||^{p}-q(\lambda\int_{\Omega}f|u|^{q}+\mu\int_{\Omega}g|v|^{q})\\
&-&(r+s)\int_{\Omega}h|u|^{r}|v|^{s}\\
&=&(p-q)(\lambda\int_{\Omega}f|u|^{q}+\mu\int_{\Omega}g|v|^{q})\\
&+&(p-r-s)\int_{\Omega}h|u|^{r}|v|^{s},\qquad(2)\\
\end{array}$$ Now, we split $M_{\lambda,\mu}$ into three parts:
$$\begin{array}{rcl}M_{\lambda,\mu}^{+}&=&\{(u,v)\in
M_{\lambda,\mu}\mid\langle\psi^{'}_{\lambda,\mu}(u,v),(u,v)\rangle>0\}\\
M_{\lambda,\mu}^{0}&=&\{(u,v)\in
M_{\lambda,\mu}\mid\langle\psi^{'}_{\lambda,\mu}(u,v),(u,v)\rangle=0\}\\
M_{\lambda,\mu}^{-}&=&\{(u,v)\in
M_{\lambda,\mu}\mid\langle\psi^{'}_{\lambda,\mu}(u,v),(u,v)\rangle<0\}\\
\end{array}$$
As $J_{\lambda}$ is not bounded below on $W$, it is useful to
consider the functional on the Nehari manifold
$$M_{\lambda}(\Omega)=\{(u,v)\in W~:~~\langle J^{'}_{\lambda}(u,v),(u,v)\rangle=0\}$$
where $\langle~,~\rangle$ denotes the usual duality. Thus
$(u,v)\in M_{\lambda}(\Omega)$ if and only if
$$\begin{array}{rcl}&&(\int_{\Omega}|\nabla u|^{2}dx+\int_{\Omega}|\nabla
v|^{2}dx)-\lambda(\int_{\Omega}a(x)|u|^{q}dx+\int_{\Omega}a(x)|v|^{q}dx)\\
&-&(\int_{\Omega}b(x)|u|^{\alpha}|v|^{\beta}dx)=0.\hspace{4cm}(2.1)\\
\end{array}$$
Clearly $M_{\lambda}(\Omega)$ is a much smaller set than $W$ and, as
we shall show, $J_{\lambda}$ is much better behaved on
$M_{\lambda}(\Omega)$. In particular, on $M_{\lambda}(\Omega)$ we
have that
$$\begin{array}{rcl}J_{\lambda}(u,v)&=&(\frac{1}{2}-\frac{1}{q})(\int_{\Omega}|\nabla u|^{2}dx+\int_{\Omega}|\nabla
v|^{2}dx)\\
&+&(\frac{1}{q}-\frac{1}{\alpha+\beta})(\int_{\Omega}b(x)|u|^{\alpha}|v|^{\beta}dx)\\
&=&(\frac{1}{2}-\frac{1}{\alpha+\beta})(\int_{\Omega}|\nabla u|^{2}dx+\int_{\Omega}|\nabla
v|^{2}dx)\\
&-&\lambda(\frac{1}{q}-\frac{1}{\alpha+\beta})(\int_{\Omega}a(x)|u|^{q}dx+\int_{\Omega}a(x)|v|^{q}dx).\\
\end{array}\hspace{1cm}(2.2)$$\
Then, we have the following results.\\\\
%%%%%%%%%%%%%%%%%%%%%%%%%%%%%%%%%%%%%%%%%%%%%%%%%%%%%%%%%%%%%%%%%%%%%%%%%

\begin{lemma}\label{maintheorem0} If $(u_{0},v_{0})$ is a local minimizer for
$J_{\lambda,\mu}$ on $M_{\lambda,\mu}$ and $(u_{0},v_{0})\notin
M_{\lambda}^{0}$, then $J_{\lambda,\mu}^{'}(u_{0},v_{0})=0$ in
$W^{'}=W^{-1,p^{'}}(\Omega)\times W^{-1,p^{'}}(\Omega)$
existence of positive solutions of the Laplacian system
\[\left\{\begin{array}{lll}
-\Delta u=\lambda a(x)|u|^{q-2}u+\beta\frac{\alpha}{\alpha+\beta}b(x)|u|^{\alpha-2}u|v|^{\beta}&$for~$x\in\Omega$$,  \\
-\Delta v=\lambda a(x)|v|^{q-2}v+\beta\frac{\beta}{\alpha+\beta}b(x)|u|^{\alpha}|v|^{\beta-2}v&$for~$x\in\Omega$$,\qquad(E_{\lambda}) \\
u=v=0 &$for~$x\in\partial\Omega$$
\end{array}\right.\]
\\
.
\end{lemma}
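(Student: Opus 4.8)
The plan is to invoke the Lagrange multiplier rule on the constraint manifold $M_{\lambda,\mu}$ and then to exploit the nondegeneracy encoded in the hypothesis $(u_{0},v_{0})\notin M_{\lambda}^{0}$ in order to force the multiplier to vanish. First I would record that both $J_{\lambda,\mu}$ and the constraint functional $\psi_{\lambda,\mu}(u,v)=\langle J^{'}_{\lambda,\mu}(u,v),(u,v)\rangle$ are of class $C^{1}$ on $W$, since each is assembled from the norm and from the integral terms $\int_{\Omega}f|u|^{q}$, $\int_{\Omega}g|v|^{q}$ and $\int_{\Omega}h|u|^{r}|v|^{s}$, all of which are continuously differentiable by the growth restrictions $1<q<p<r+s<p^{*}$ together with the Sobolev embeddings $W\hookrightarrow L^{l}$ valid for $1<l\leq p^{*}$.

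Next I would observe that, near $(u_{0},v_{0})$, the set $M_{\lambda,\mu}=\psi_{\lambda,\mu}^{-1}(0)$ is a $C^{1}$ Banach submanifold of codimension one. This is precisely where the hypothesis enters: from formula $(2)$ we have $\langle\psi^{'}_{\lambda,\mu}(u_{0},v_{0}),(u_{0},v_{0})\rangle\neq 0$ exactly because $(u_{0},v_{0})\notin M_{\lambda}^{0}$, and in particular $\psi^{'}_{\lambda,\mu}(u_{0},v_{0})\neq 0$ in $W^{'}$. Hence the linear functional $\psi^{'}_{\lambda,\mu}(u_{0},v_{0})$ is surjective onto $\mathbb{R}$, the implicit function theorem applies, and the tangent space to $M_{\lambda,\mu}$ at $(u_{0},v_{0})$ equals $\ker\psi^{'}_{\lambda,\mu}(u_{0},v_{0})$.

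The core step is then the Lagrange multiplier argument. Since $(u_{0},v_{0})$ is a local minimizer of the restriction of $J_{\lambda,\mu}$ to $M_{\lambda,\mu}$, the restricted derivative annihilates the tangent space, so $J^{'}_{\lambda,\mu}(u_{0},v_{0})$ vanishes on $\ker\psi^{'}_{\lambda,\mu}(u_{0},v_{0})$. By the standard duality fact that a functional vanishing on the kernel of a nonzero functional must be a scalar multiple of it, there exists $\theta\in\mathbb{R}$ with
$$J^{'}_{\lambda,\mu}(u_{0},v_{0})=\theta\,\psi^{'}_{\lambda,\mu}(u_{0},v_{0})\quad\text{in }W^{'}.$$
Finally I would test this identity against $(u_{0},v_{0})$ itself. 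Because $(u_{0},v_{0})\in M_{\lambda,\mu}$, we have $\langle J^{'}_{\lambda,\mu}(u_{0},v_{0}),(u_{0},v_{0})\rangle=\psi_{\lambda,\mu}(u_{0},v_{0})=0$, whence $0=\theta\,\langle\psi^{'}_{\lambda,\mu}(u_{0},v_{0}),(u_{0},v_{0})\rangle$. Since the bracket on the right is nonzero by the hypothesis $(u_{0},v_{0})\notin M_{\lambda}^{0}$, we conclude $\theta=0$, and therefore $J^{'}_{\lambda,\mu}(u_{0},v_{0})=0$ in $W^{'}$.

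The main obstacle is the second step, namely the clean verification that $M_{\lambda,\mu}$ really is a $C^{1}$ manifold near $(u_{0},v_{0})$, so that the Lagrange multiplier rule is legitimately applicable; the condition $(u_{0},v_{0})\notin M_{\lambda}^{0}$ is exactly what excludes the degenerate situation in which $\psi^{'}_{\lambda,\mu}(u_{0},v_{0})$ could fail to be a submersion. Once this nondegeneracy is secured, the concluding computation of testing the multiplier identity against $(u_{0},v_{0})$ is entirely routine.
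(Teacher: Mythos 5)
Your argument is correct and coincides with the one the paper relies on: the paper's proof is only a citation to Brown--Zhang [3, Theorem 2.3], and what you have written out (the Lagrange multiplier rule on $\psi_{\lambda,\mu}^{-1}(0)$, with the hypothesis $(u_{0},v_{0})\notin M_{\lambda,\mu}^{0}$ guaranteeing $\langle\psi^{'}_{\lambda,\mu}(u_{0},v_{0}),(u_{0},v_{0})\rangle\neq 0$ and hence forcing the multiplier $\theta$ to vanish upon testing against $(u_{0},v_{0})$) is precisely that cited argument. You have simply supplied the details the paper omits, including the submersion condition that makes the multiplier rule applicable.
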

\begin{proof}[\bf Proof]
Our proof is almost the same  as that in Brown-Zhang[\cite{3}, theorem
2.3].
\end{proof}
\begin{lemma}\label{maintheorem8}
The energy functional $J_{\lambda,\mu}$ is coercive and bounded
below on $M_{\lambda,\mu}$.
\end{lemma}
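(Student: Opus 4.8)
The plan is to exploit the Nehari constraint to eliminate the superlinear term and reduce $J_{\lambda,\mu}$ to a combination of a $p$-th power and a $q$-th power of the norm, with $q<p$. First I would use the defining identity $(1)$ for $(u,v)\in M_{\lambda,\mu}$, namely
$$\int_{\Omega}h|u|^{r}|v|^{s}=\|(u,v)\|^{p}-\Big(\lambda\int_{\Omega}f|u|^{q}+\mu\int_{\Omega}g|v|^{q}\Big),$$
and substitute it into the expression defining $J_{\lambda,\mu}(u,v)$. Collecting terms yields, for every $(u,v)\in M_{\lambda,\mu}$,
$$J_{\lambda,\mu}(u,v)=\Big(\frac{1}{p}-\frac{1}{r+s}\Big)\|(u,v)\|^{p}-\Big(\frac{1}{q}-\frac{1}{r+s}\Big)\Big(\lambda\int_{\Omega}f|u|^{q}+\mu\int_{\Omega}g|v|^{q}\Big),$$
in which both coefficients are strictly positive because $1<q<p<r+s$. (This is exactly the structure exhibited in $(2.2)$ for the companion system.)

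Next I would bound the lower-order integral. Since $f,g\in C(\overline{\Omega})$ are bounded, Hölder's inequality together with the Sobolev embedding recorded in Section 2, applied with $l=q$ (admissible since $1<q<p\leq p^{*}$), gives a constant $C=C(\lambda,\mu,\|f\|_{\infty},\|g\|_{\infty})>0$ with
$$\Big|\lambda\int_{\Omega}f|u|^{q}+\mu\int_{\Omega}g|v|^{q}\Big|\leq C\Big(\int_{\Omega}|u|^{q}+\int_{\Omega}|v|^{q}\Big)\leq C\,S_{q}^{-q/p}\|(u,v)\|^{q}.$$
Combining this with the previous display produces the estimate
$$J_{\lambda,\mu}(u,v)\geq\Big(\frac{1}{p}-\frac{1}{r+s}\Big)\|(u,v)\|^{p}-\Big(\frac{1}{q}-\frac{1}{r+s}\Big)C\,S_{q}^{-q/p}\|(u,v)\|^{q}.$$

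Finally, writing $t=\|(u,v)\|$, the right-hand side has the form $A\,t^{p}-B\,t^{q}$ with $A,B>0$ and $p>q$. Since the positive $p$-th power dominates the $q$-th power, this scalar expression tends to $+\infty$ as $t\to\infty$, which gives coercivity of $J_{\lambda,\mu}$ on $M_{\lambda,\mu}$. Being continuous on $[0,\infty)$ and diverging to $+\infty$, the function $A\,t^{p}-B\,t^{q}$ attains a finite minimum, and that minimum is a lower bound for $J_{\lambda,\mu}$ on $M_{\lambda,\mu}$, establishing boundedness below. I expect no serious obstacle here: the only points requiring care are checking that the Nehari substitution indeed produces strictly positive coefficients (which uses the full chain $q<p<r+s$) and that $q\leq p^{*}$, so that the Sobolev inequality with exponent $l=q$ may legitimately be invoked.
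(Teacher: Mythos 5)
Your proposal follows essentially the same route as the paper: substitute the Nehari identity to eliminate the $\int_\Omega h|u|^r|v|^s$ term, rewrite $J_{\lambda,\mu}$ as $(\tfrac1p-\tfrac1{r+s})\|(u,v)\|^p-(\tfrac1q-\tfrac1{r+s})(\lambda\int f|u|^q+\mu\int g|v|^q)$, control the $q$-power term via H\"older and the Sobolev constant $S_q$, and conclude from $A t^p-Bt^q$ with $p>q$. Your write-up is in fact more careful than the paper's (whose final displayed inequality has a sign slip and omits the explicit scalar-minimum step), but the underlying argument is identical.
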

\begin{proof}[\bf Proof] If $(u,v)\in M_{\lambda,\mu}$, then by the Sobolev
trace imbedding theorem
$$\begin{array}{rcl}J_{\lambda,\mu}(u,v)&=&\frac{1}{p}||(u,v)||^{p}-\frac{1}{q}(\lambda\int_{\Omega}f|u|^{q}+\mu\int_{\Omega}g|v|^{q})\\
&-&\frac{1}{r+s}\int_{\Omega}h|u|^{r}|v|^{s}\\
&=&\frac{r+s-p}{p(r+s)}||(u,v)||^{p}+(-\frac{1}{q}+\frac{1}{r+s})(\lambda\int_{\Omega}f|u|^{q}+\mu\int_{\Omega}g|v|^{q})\\
&\geq&
\frac{r+s-p}{p(r+s)}||(u,v)||^{p}+S_{q}^{\frac{q}{p}}(\frac{r+s-q}{q(r+s)})(|\lambda|||f||_{\infty}||u||^{q}+|\mu|||g||_{\infty}||v||^{q})\\
\end{array}$$ Thus, $J_{\lambda}$ is coercive and bounded below on
$M_{\lambda,\mu}$.
\end{proof}
\begin{lemma}\label{maintheorem3}
(i) For any $(u,v)\in M_{\lambda,\mu}^{+}$, we have
$(\lambda\int_{\Omega}f|u|^{q}+\mu\int_{\Omega}g|v|^{q})>0$
$$\begin{array}{rcl}
&(ii)& For~ any ~(u,v)\in M_{\lambda,\mu}^{0}, we~ have~
(\lambda\int_{\Omega}f|u|^{q}+\mu\int_{\Omega}g|v|^{q})>0\\
&and& \int_{\Omega}h|u|^{r}|v|^{s}>0\\
&(iii)& For ~any ~(u,v)\in M_{\lambda,\mu}^{-}, we ~have~
 \int_{\Omega}h|u|^{r}|v|^{s}>0.\\
\end{array}$$
\end{lemma}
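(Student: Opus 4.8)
The plan is to read off all three sign statements directly from the identity (2) for $\langle\psi'_{\lambda,\mu}(u,v),(u,v)\rangle$, after re-expressing it in two equivalent forms by means of the Nehari constraint (1). Throughout, write $I(u,v)=\lambda\int_{\Omega}f|u|^{q}+\mu\int_{\Omega}g|v|^{q}$ and $K(u,v)=\int_{\Omega}h|u|^{r}|v|^{s}$, so that (1) becomes $\|(u,v)\|^{p}=I+K$ and (2) becomes $\langle\psi'_{\lambda,\mu}(u,v),(u,v)\rangle=(p-q)\,I+(p-r-s)\,K$. Since every $(u,v)\in M_{\lambda,\mu}$ satisfies $(u,v)\neq(0,0)$, we have $\|(u,v)\|^{p}>0$, and the structural inequalities $1<q<p<r+s<p^{*}$ guarantee that all three constants $p-q$, $r+s-p$ and $r+s-q$ are strictly positive.

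First I would eliminate $K$ via $K=\|(u,v)\|^{p}-I$, which turns (2) into
$$\langle\psi'_{\lambda,\mu}(u,v),(u,v)\rangle=(r+s-q)\,I-(r+s-p)\,\|(u,v)\|^{p}.$$
For (i), membership in $M^{+}_{\lambda,\mu}$ means the left-hand side is positive, so $(r+s-q)\,I>(r+s-p)\,\|(u,v)\|^{p}>0$, and dividing by $r+s-q>0$ yields $I>0$, which is exactly the asserted inequality. The same identity with equality settles half of (ii): on $M^{0}_{\lambda,\mu}$ the left-hand side vanishes, so $(r+s-q)\,I=(r+s-p)\,\|(u,v)\|^{p}>0$ and again $I>0$.

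Next I would instead eliminate $I$ via $I=\|(u,v)\|^{p}-K$, recasting (2) as
$$\langle\psi'_{\lambda,\mu}(u,v),(u,v)\rangle=(p-q)\,\|(u,v)\|^{p}-(r+s-q)\,K.$$
For (iii), membership in $M^{-}_{\lambda,\mu}$ forces the left-hand side to be negative, whence $(r+s-q)\,K>(p-q)\,\|(u,v)\|^{p}>0$ and so $K>0$. On $M^{0}_{\lambda,\mu}$ the left-hand side is zero, giving $(r+s-q)\,K=(p-q)\,\|(u,v)\|^{p}>0$ and hence $K>0$, which completes (ii).

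There is no genuine analytic obstacle here: the entire argument rests on the observation that, on the Nehari manifold, the two competing integrals $I$ and $K$ are tied to the norm through (1), so each sign condition coming from the decomposition collapses to a one-variable inequality once the always-positive norm term is isolated. The only point requiring care is the bookkeeping of the three positive constants $p-q$, $r+s-p$, $r+s-q$, all pinned down by $q<p<r+s$; choosing to eliminate $K$ for the statements about $I$ and to eliminate $I$ for the statement about $K$ is precisely what keeps every resulting inequality clean.
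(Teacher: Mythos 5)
Your proof is correct and takes essentially the same route as the paper: the paper's proof simply asserts that all three claims are ``immediate from (1) and (2),'' and your elimination of $K$ (resp.\ $I$) via the Nehari constraint is exactly the computation that substantiates that assertion. The sign bookkeeping of $p-q$, $r+s-p$, $r+s-q$ under $1<q<p<r+s$ is handled correctly throughout.
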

\begin{proof}[\bf Proof] The proofs are immediate from (1) and
(2).
\end{proof}
\begin{lemma}\label{maintheorem5} The exists $\lambda_{0}>0, \mu_{0}>0$ such that for
$0<|\lambda|<\lambda_{0}$, $0<|\mu|<\mu_{0}$ we have $M_
{\lambda,\mu}^{0}=\emptyset$.
\end{lemma}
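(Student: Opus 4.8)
The plan is to argue by contradiction. Suppose that for arbitrarily small parameters the degenerate set is nonempty, and fix a point $(u,v)\in M_{\lambda,\mu}^{0}$. Abbreviate
$A=\lambda\int_\Omega f|u|^q+\mu\int_\Omega g|v|^q$ and $B=\int_\Omega h|u|^r|v|^s$. Membership in $M_{\lambda,\mu}$ gives, via (1), the relation $\|(u,v)\|^p=A+B$, while the defining condition $\langle\psi'_{\lambda,\mu}(u,v),(u,v)\rangle=0$ of $M^{0}_{\lambda,\mu}$ together with (2) gives $(p-q)A+(p-r-s)B=0$. Since $1<q<p<r+s$, solving this $2\times 2$ system for $\|(u,v)\|^p$ yields the two identities
\[
\|(u,v)\|^{p}=\frac{r+s-q}{\,r+s-p\,}\,A
\qquad\text{and}\qquad
\|(u,v)\|^{p}=\frac{r+s-q}{\,p-q\,}\,B,
\]
and by Lemma~\ref{maintheorem3}(ii) both $A>0$ and $B>0$, so in particular $(u,v)\ne(0,0)$.

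The core of the argument is to extract two incompatible estimates on $\|(u,v)\|$ from these identities. For an upper bound, estimate $A$ using $f,g\in C(\overline\Omega)$, H\"older, and the Sobolev embedding $\int_\Omega|u|^q+\int_\Omega|v|^q\le S_q^{-q/p}\|(u,v)\|^q$, which gives $A\le C(|\lambda|,|\mu|)\,S_q^{-q/p}\|(u,v)\|^q$ with $C(|\lambda|,|\mu|)=\max\{|\lambda|\,\|f\|_\infty,\,|\mu|\,\|g\|_\infty\}$. Substituting into the first identity and dividing by $\|(u,v)\|^q$ gives
\[
\|(u,v)\|^{\,p-q}\le \frac{r+s-q}{r+s-p}\,S_q^{-q/p}\,C(|\lambda|,|\mu|),
\]
an upper bound that tends to $0$ as $(\lambda,\mu)\to(0,0)$. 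For a lower bound, estimate $B$ by Young's inequality $|u|^r|v|^s\le\frac{r}{r+s}|u|^{r+s}+\frac{s}{r+s}|v|^{r+s}$ followed by $\int_\Omega|u|^{r+s}+\int_\Omega|v|^{r+s}\le S_{r+s}^{-(r+s)/p}\|(u,v)\|^{r+s}$, yielding $B\le \|h\|_\infty\,S_{r+s}^{-(r+s)/p}\|(u,v)\|^{r+s}$. Feeding this into the second identity and dividing by $\|(u,v)\|^p$ gives
\[
\|(u,v)\|^{\,r+s-p}\ \ge\ \frac{p-q}{\,r+s-q\,}\,\frac{1}{\|h\|_\infty}\,S_{r+s}^{(r+s)/p}\ =:\ c_0>0,
\]
a strictly positive lower bound that is independent of $\lambda$ and $\mu$.

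Finally I would fix $\lambda_0,\mu_0>0$ so small that, for all $0<|\lambda|<\lambda_0$ and $0<|\mu|<\mu_0$, the upper estimate forces $\|(u,v)\|<c_0^{1/(r+s-p)}$; this is possible precisely because the right-hand side of the upper estimate vanishes as $(\lambda,\mu)\to(0,0)$, whereas $c_0$ is fixed. For such parameters the displayed upper and lower bounds on $\|(u,v)\|$ are contradictory, so no element of $M^{0}_{\lambda,\mu}$ can exist, i.e. $M^{0}_{\lambda,\mu}=\emptyset$. The only genuinely delicate point is the treatment of the mixed term $\int_\Omega h|u|^r|v|^s$: it must be reduced to the single Sobolev quantity $\int_\Omega|u|^{r+s}+\int_\Omega|v|^{r+s}$ through Young's inequality so that the resulting lower bound $c_0$ is manifestly parameter-independent; the remainder is routine bookkeeping with the exponents $q<p<r+s$ and the best constants $S_q,S_{r+s}$.
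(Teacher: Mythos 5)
Your proposal is correct and follows essentially the same route as the paper: from membership in $M_{\lambda,\mu}$ and the degeneracy condition you derive the two identities expressing $\|(u,v)\|^{p}$ in terms of $A$ and of $B$, then use H\"older/Sobolev (and Young for the mixed term) to obtain a parameter-independent lower bound and a vanishing upper bound on $\|(u,v)\|$, which are incompatible for small $|\lambda|,|\mu|$. Your write-up is in fact a cleaner version of the paper's argument, making explicit the reduction of $\int_\Omega h|u|^{r}|v|^{s}$ to the Sobolev quantity, which the paper only gestures at.
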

\begin{proof}[\bf Proof] Suppose otherwise, that is $M_ {\lambda,\mu}^{0}\neq\emptyset$ for
all $(\lambda,\mu)\in\Bbb R^{2}-\{(0,0)\}$. Then by lemma \ref{maintheorem3},
$$\begin{array}{rcl}
0&=&\langle
J_{\lambda,\mu}^{'}(u,v),(u,v)\rangle=(p-q)||(u,v)||^{p}\\
&-&(r+s-q)\int_{\Omega}h|u|^{r}|v|^{s}\\
&=&(p-r-s)||(u,v)||^{p}-(q-r-s)(\lambda\int_{\Omega}f|u|^{q}+\mu\int_{\Omega}g|v|^{q})\\
\end{array}$$
for all $(u,v)\in M_{\lambda,\mu}^{0}$. By the H\~{o}lder
inequality, Minkowski inequality and the Sobolev imbedding
theorem,
$$||(u,v)||\geq(\frac{2(p-q)}{r+s-q}||h||_{\infty}S_{r+s}^{\frac{r+s}{p}})^{\frac{1}{(p-r-s)}}$$
and
$$||(u,v)||\leq(\frac{r+s-q}{r+s-p}(|\lambda|~||f||_{\infty}+|\mu|~||g||_{\infty}))^{\frac{1}{p-q}}S_{p}^{\frac{q}{p(p-q)}}$$
If $|\lambda|,|\mu|$ is sufficiently small, this is impossible.
Thus, we can conclude that there exists $\lambda_{0}>0, \mu_{0}>0$
such that if $0<|\lambda|<\lambda_{0}$ and $0<|\mu|<\mu_{0}$, we
have $M_ {\lambda,\mu}^{0}=\emptyset$.
\end{proof}
\begin{theorem} $J_{\lambda}$ is coercive and bounded below on $M_{\lambda}(\Omega)$.
\end{theorem}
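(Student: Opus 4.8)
The plan is to argue exactly as in Lemma~\ref{maintheorem8}, specialized to the Laplacian case $p=2$, but starting from the reduced identity (2.2) rather than the raw definition of $J_\lambda$. The structural point is that on $M_\lambda(\Omega)$ the coupling integral $\int_\Omega b(x)|u|^\alpha|v|^\beta\,dx$ has already been eliminated in favour of the remaining two terms, so what is left is a genuine quadratic leading term minus a lower-order $q$-power term; coercivity and boundedness below then follow from the single inequality $1<q<2<\alpha+\beta$.

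First I would fix $(u,v)\in M_\lambda(\Omega)$ and invoke the second expression in (2.2),
$$J_\lambda(u,v)=\left(\frac{1}{2}-\frac{1}{\alpha+\beta}\right)\|(u,v)\|^2-\lambda\left(\frac{1}{q}-\frac{1}{\alpha+\beta}\right)\int_\Omega a(x)\big(|u|^q+|v|^q\big)\,dx,$$
where $\|(u,v)\|^2=\int_\Omega|\nabla u|^2\,dx+\int_\Omega|\nabla v|^2\,dx$. Since $1<q<2<\alpha+\beta$, both constants $\tfrac12-\tfrac1{\alpha+\beta}$ and $\tfrac1q-\tfrac1{\alpha+\beta}$ are strictly positive; I will call them $c_1$ and $c_2$.

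Next I would estimate the integral term. By Hölder's inequality, $\int_\Omega a(x)\big(|u|^q+|v|^q\big)\,dx\le\|a\|_\infty\big(\int_\Omega|u|^q+\int_\Omega|v|^q\big)$, and the Sobolev inequality recorded in Section~2 (with $l=q$, $p=2$) gives $\int_\Omega|u|^q+\int_\Omega|v|^q\le S_q^{-q/2}\|(u,v)\|^q$. Substituting and bounding $\pm\lambda$ by $|\lambda|$,
$$J_\lambda(u,v)\ge c_1\|(u,v)\|^2-c_2\,|\lambda|\,\|a\|_\infty\,S_q^{-q/2}\,\|(u,v)\|^q.$$

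Finally I would read off both conclusions from this estimate. Writing $t=\|(u,v)\|\ge0$, the right-hand side has the form $c_1 t^2-C\,t^q$ with $C=c_2|\lambda|\,\|a\|_\infty S_q^{-q/2}\ge0$ and exponent $q<2$; as a function of $t$ it tends to $+\infty$ as $t\to\infty$, which is precisely coercivity, and being continuous with this growth it attains a finite minimum over $[0,\infty)$, which yields the lower bound. I expect no genuine obstacle here: the only care required is to use the Nehari-reduced form (2.2), so that the dominant term is the quadratic $\|(u,v)\|^2$ while the competing term carries the strictly smaller power $q$. Once that structural reduction is in place, the remainder is the same sign-and-monotonicity bookkeeping as in Lemma~\ref{maintheorem8}.
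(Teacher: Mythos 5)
Your proposal is correct and follows essentially the same route as the paper: start from the second identity in (2.2) so the coupling term is eliminated, bound the $q$-power term by $C\|(u,v)\|^{q}$ via H\"older and the Sobolev embedding, and conclude from the elementary fact that $c_{1}t^{2}-Ct^{q}$ with $1<q<2$ tends to $+\infty$ and is bounded below. The only cosmetic difference is that you carry $\|a\|_{\infty}$ where the paper uses $\|a\|_{L^{p^{*}}}$ and you do not compute the explicit minimum, which the paper records as its quantitative lower bound.
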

\begin{proof}[\bf Proof]
$$\left\{\begin{array}{ll}
-\Delta u=\lambda a(x)|u|^{q-2}u+\beta
\frac{\alpha}{\alpha+\beta}b(x)|u|^{\alpha-2}u|v|^{\beta}&$for~$x\in\Omega$$,  \\
-\Delta v=\lambda a(x)|v|^{q-2}v+\beta\frac{\beta}{\alpha+\beta}b(x)|u|^{\alpha}|v|^{\beta-2}v&$for~$x\in\Omega$$,\qquad(E_{\lambda}) \\
u=v=0 &$for~$x\in\partial\Omega.$$
\end{array}\right.$$
It follows from (2.2) and by the H$\ddot{o}$lder inequality
$$\begin{array}{rcl}J_{\lambda}(u,v)&=&\frac{\alpha+\beta-2}{2(\alpha+\beta)}||(u,v)||^{2}_{H}-(\frac{\alpha+\beta-q}{q(\alpha+\beta)})(\int_{\Omega}\lambda a(x)|u|^{q}+\int_{\Omega}\lambda|v|^{q}dx)\\
&\geq&\frac{\alpha+\beta-2}{2(\alpha+\beta)}||(u,v)||^{2}_{H}\\
&-&S^{q}(\frac{\alpha+\beta-q}{q(\alpha+\beta)})((|\lambda|||a||_{L^{p^{*}}})^{\frac{2}{2-q}}+(|\lambda|||a||_{L^{p^{*}}})^{\frac{2}{2-q}})^{\frac{2-q}{2}}||(u,v)||^{q}_{H}\\
&\geq&\frac{S^{\frac{2q}{2-q}}(q-2)(\alpha+\beta-q)^{\frac{2}{2-q}}}{2q(\alpha+\beta)(\alpha+\beta-2)^{\frac{q}{2-q}}}((|\lambda|||a||_{L^{p^{*}}})^{\frac{2}{2-q}}+(|\lambda|||a||_{L^{p^{*}}})^{\frac{2}{2-q}}).\\
\end{array}$$
Thus, $J_{\lambda}$ is coercive on $M_{\lambda}(\Omega)$ and
$$J_{\lambda}(u,v)\geq\frac{S^{\frac{2q}{2-q}}(q-2)(\alpha+\beta-q)^{\frac{2}{2-q}}}{2q(\alpha+\beta)(\alpha+\beta-2)^{\frac{q}{2-q}}}((|\lambda|||a||_{L^{p^{*}}})^{\frac{2}{2-q}}+(|\lambda|||a||_{L^{p^{*}}})^{\frac{2}{2-q}}).$$
This completes the proof.
\end{proof}
It follows from the above lemma that when $\lambda<\lambda_{1}$,
$(\int_{\Omega}a(x)|u|^{q}dx+\int_{\Omega}a(x)|v|^{q}dx)\\>0$ and
$(\int_{\Omega}b(x)|u|^{\alpha}|v|^{\beta}dx)>0$ then $\phi_{u,v}$
must have exactly two critical points as
discussed in the remarks preceding the lemma.\\
Thus when $\lambda<\lambda_{1}$ we have obtained a complete
knowledge of the number of critical points of $\phi_{u,v}$, of
the intervals on which $\phi_{u,v}$ is increasing and decreasing
and of the multiples of $(u,v)$ which lie in
$M_{\lambda}(\Omega)$ for every possible choice of signs of
$(\int_{\Omega}b(x)|u|^{\alpha}|v|^{\beta}dx)$ and
$(\int_{\Omega}a(x)|u|^{q}dx+\int_{\Omega}a(x)|v|^{q}dx)$. In
particular we have the following result.
\begin{corollary}
$M_{\lambda}^{0}(\Omega)=\emptyset$ when $0<\lambda<\lambda_{1}$.
\end{corollary}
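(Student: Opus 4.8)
The plan is to argue by contradiction, in exact parallel with the proof of Lemma~\ref{maintheorem5} (the two-parameter counterpart), and to read the conclusion off the fibering-map description of $M_{\lambda}(\Omega)$ given in the paragraph preceding the statement. Suppose $M_{\lambda}^{0}(\Omega)\neq\emptyset$ for some $\lambda$ with $0<\lambda<\lambda_{1}$, and pick $(u,v)$ in it. By definition this point satisfies the Nehari identity $(2.1)$ together with the vanishing of the second fibering derivative,
$$2\|(u,v)\|_{H}^{2}-q\lambda\Big(\int_{\Omega}a(x)|u|^{q}dx+\int_{\Omega}a(x)|v|^{q}dx\Big)-(\alpha+\beta)\int_{\Omega}b(x)|u|^{\alpha}|v|^{\beta}dx=0.$$
By the $(E_{\lambda})$-analogue of Lemma~\ref{maintheorem3}(ii), this already forces $\lambda\big(\int_{\Omega}a(x)|u|^{q}dx+\int_{\Omega}a(x)|v|^{q}dx\big)>0$ and $\int_{\Omega}b(x)|u|^{\alpha}|v|^{\beta}dx>0$, so $(u,v)$ lies precisely in the sign regime for which $\phi_{u,v}$ was analysed.

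First I would eliminate the two nonlinear integrals one at a time between $(2.1)$ and the identity above. Removing the weighted $q$-term gives
$$(2-q)\|(u,v)\|_{H}^{2}=(\alpha+\beta-q)\int_{\Omega}b(x)|u|^{\alpha}|v|^{\beta}dx,$$
and removing the coupling term gives
$$(\alpha+\beta-2)\|(u,v)\|_{H}^{2}=(\alpha+\beta-q)\,\lambda\Big(\int_{\Omega}a(x)|u|^{q}dx+\int_{\Omega}a(x)|v|^{q}dx\Big).$$
Since $1<q<2<\alpha+\beta$, every coefficient is strictly positive, so each relation expresses $\|(u,v)\|_{H}^{2}$ as a positive multiple of one of the two integrals.

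Next I would convert each relation into a bound on $\|(u,v)\|_{H}$. From the first, H\"older's inequality (with $\|b\|_{\infty}$) and the best Sobolev constant $S$ bound the coupling integral by a constant times $\|(u,v)\|_{H}^{\alpha+\beta}$; as $\alpha+\beta>2$ this yields a fixed lower bound $\|(u,v)\|_{H}\ge c_{1}>0$ independent of $\lambda$. From the second, H\"older's inequality applied to the sign-changing weight $a$ (in the norm $\|a\|_{L^{p^{*}}}$ used in the preceding theorem) together with the Sobolev embedding bounds the weighted $q$-term by a constant times $|\lambda|\,\|a\|_{L^{p^{*}}}\|(u,v)\|_{H}^{q}$; as $q<2$ this yields an upper bound $\|(u,v)\|_{H}\le c_{2}\,\lambda^{1/(2-q)}$, which tends to $0$ as $\lambda\to0^{+}$.

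Finally I would fix $\lambda_{1}>0$ so small that $c_{2}\,\lambda^{1/(2-q)}<c_{1}$ for every $0<\lambda<\lambda_{1}$; the lower and upper bounds then contradict each other, so no such $(u,v)$ can exist and $M_{\lambda}^{0}(\Omega)=\emptyset$ on $(0,\lambda_{1})$. Equivalently, a point of $M_{\lambda}^{0}(\Omega)$ would be a degenerate critical point of some $\phi_{u,v}$, whereas for $\lambda<\lambda_{1}$ every $\phi_{u,v}$ in the relevant sign regime has exactly two nondegenerate critical points, as recalled just before the corollary. I expect the only delicate part to be bookkeeping: pinning down the exact constants and the correct dual exponent when applying H\"older to the weight $a$, and checking that $\lambda_{1}$ can be taken equal to the threshold already produced by the fibering analysis; the remaining comparison of two power-law bounds is routine.
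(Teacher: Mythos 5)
Your argument is correct, but it follows a genuinely different route from the one the paper (implicitly) takes for this corollary. The paper supplies no computation here at all: the statement is offered as an immediate consequence of the preceding fibering-map discussion --- for $\lambda<\lambda_{1}$ every $\phi_{u,v}$ in the relevant sign regime has exactly two critical points, both nondegenerate, and a point of $M_{\lambda}^{0}(\Omega)$ would be a degenerate critical point of its fibering map, so none can exist. You mention that reading only as an ``equivalently'' at the end and instead run the quantitative contradiction: eliminating each nonlinear integral between the Nehari identity $(2.1)$ and the vanishing second fibering derivative to obtain $(2-q)\|(u,v)\|_{H}^{2}=(\alpha+\beta-q)\int_{\Omega}b(x)|u|^{\alpha}|v|^{\beta}dx$ and $(\alpha+\beta-2)\|(u,v)\|_{H}^{2}=(\alpha+\beta-q)\lambda\bigl(\int_{\Omega}a(x)|u|^{q}dx+\int_{\Omega}a(x)|v|^{q}dx\bigr)$, and then playing a $\lambda$-independent lower bound against a $\lambda^{1/(2-q)}$ upper bound on the norm. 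The algebra and the signs of the coefficients are right (since $1<q<2<\alpha+\beta$), and this is exactly the paper's own proof of Lemma~\ref{maintheorem5} transplanted from $(E_{\lambda,\mu})$ to $(E_{\lambda})$. What your version buys is self-containedness: the fibering-map analysis and the definition of $\lambda_{1}$ on which the paper's one-line justification leans are nowhere actually written out in the text, so yours is the only complete argument on the table; the cost is that it manufactures its own smallness threshold, and identifying that threshold with the (never defined) $\lambda_{1}$ of the statement is precisely the bookkeeping issue you flag. Two minor points: the appeal to the analogue of Lemma~\ref{maintheorem3}(ii) is superfluous, since positivity of both integrals falls straight out of your two eliminated identities; and your lower bound tacitly uses $(u,v)\neq(0,0)$, which the paper's definition of $M_{\lambda}(\Omega)$ (unlike that of $M_{\lambda,\mu}$) neglects to impose.
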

By lemma \ref{maintheorem5}, for $0<|\lambda|<\lambda_{0}$ and $0<|\mu|<\mu_{0}$
we write $M_{\lambda,\mu}=M_{\lambda,\mu}^{+}\cup
M_{\lambda,\mu}^{-}$ and define
$$\alpha_{\lambda,\mu}^{+}=\displaystyle\inf_{(u,v)\in
M_{\lambda,\mu}^{+}} J_{\lambda,\mu}(u,v)$$and
$$\alpha_{\lambda,\mu}^{-}=\displaystyle\inf_{(u,v)\in
M_{\lambda,\mu}^{-}} J_{\lambda,\mu}(u,v)$$Then we have the
following results.\\\\
\begin{lemma}\label{maintheorem} There exist
minimizing sequences $\{(u_{n}^{\pm},v_{n}^{\pm})\}$ in
$M_{\lambda,\mu}^{\pm}$ for $J_{\lambda,\mu}$ such that
$J_{\lambda,\mu}(u_{n}^{\pm},v_{n}^{\pm})=\alpha_{\lambda,\mu}^{\pm}+o(1)$
and $J_{\lambda,\mu}^{'}(u_{n}^{\pm},v_{n}^{\pm})=o(1)$ in
$W^{'}$.
\end{lemma}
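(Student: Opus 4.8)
The plan is to produce the two Palais--Smale sequences by applying Ekeland's variational principle to $J_{\lambda,\mu}$ restricted to the constraint set, and then to convert the resulting approximate constrained minimizers into genuine Palais--Smale sequences for the free functional through a Lagrange--multiplier argument. First, by Lemma \ref{maintheorem8} the functional $J_{\lambda,\mu}$ is coercive and bounded below on $M_{\lambda,\mu}$, hence on each of $M_{\lambda,\mu}^{\pm}$, so the infima $\alpha_{\lambda,\mu}^{\pm}$ are finite. Since $0<|\lambda|<\lambda_{0}$ and $0<|\mu|<\mu_{0}$, Lemma \ref{maintheorem5} gives $M_{\lambda,\mu}^{0}=\emptyset$, so the sets $M_{\lambda,\mu}^{+}$ and $M_{\lambda,\mu}^{-}$ are separated components of $M_{\lambda,\mu}$ and Ekeland's principle may be applied on $\overline{M_{\lambda,\mu}^{\pm}}$.

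Applying Ekeland's variational principle on $\overline{M_{\lambda,\mu}^{\pm}}$ yields a minimizing sequence $\{(u_{n}^{\pm},v_{n}^{\pm})\}$ satisfying
$$J_{\lambda,\mu}(u_{n}^{\pm},v_{n}^{\pm})<\alpha_{\lambda,\mu}^{\pm}+\frac{1}{n}$$
and
$$J_{\lambda,\mu}(w)\geq J_{\lambda,\mu}(u_{n}^{\pm},v_{n}^{\pm})-\frac{1}{n}\,\|w-(u_{n}^{\pm},v_{n}^{\pm})\|\qquad\text{for all }w\in\overline{M_{\lambda,\mu}^{\pm}}.$$
The first property is precisely $J_{\lambda,\mu}(u_{n}^{\pm},v_{n}^{\pm})=\alpha_{\lambda,\mu}^{\pm}+o(1)$, and the coercivity of $J_{\lambda,\mu}$ forces the sequence to be bounded in $W$. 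Because $M_{\lambda,\mu}^{0}=\emptyset$, the approximate minimizers remain in $M_{\lambda,\mu}^{\pm}$ for large $n$. It remains to extract from the second property the estimate $J_{\lambda,\mu}^{'}(u_{n}^{\pm},v_{n}^{\pm})=o(1)$ in $W^{'}$.

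To this end I would use the constraint $\psi_{\lambda,\mu}(u,v)=\langle J_{\lambda,\mu}^{'}(u,v),(u,v)\rangle=0$. Testing the Ekeland inequality along curves in $M_{\lambda,\mu}^{\pm}$ obtained from the fibering/implicit-function construction near $(u_{n}^{\pm},v_{n}^{\pm})$ produces, for each $n$, a Lagrange multiplier $\theta_{n}\in\mathbb{R}$ with
$$\bigl\|J_{\lambda,\mu}^{'}(u_{n}^{\pm},v_{n}^{\pm})-\theta_{n}\,\psi_{\lambda,\mu}^{'}(u_{n}^{\pm},v_{n}^{\pm})\bigr\|_{W^{'}}\to 0.$$
Pairing this relation with $(u_{n}^{\pm},v_{n}^{\pm})$ and using $\langle J_{\lambda,\mu}^{'}(u_{n}^{\pm},v_{n}^{\pm}),(u_{n}^{\pm},v_{n}^{\pm})\rangle=0$ (the sequence lies on $M_{\lambda,\mu}$) gives
$$\theta_{n}\,\langle\psi_{\lambda,\mu}^{'}(u_{n}^{\pm},v_{n}^{\pm}),(u_{n}^{\pm},v_{n}^{\pm})\rangle\to 0.$$

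The decisive step, which I expect to be the main obstacle, is to show $\theta_{n}\to 0$. Because $M_{\lambda,\mu}^{0}=\emptyset$, the quantity $\langle\psi_{\lambda,\mu}^{'}(u,v),(u,v)\rangle$ does not vanish on $M_{\lambda,\mu}^{\pm}$; combining this with the boundedness of the sequence and the two-sided bounds on $\|(u,v)\|$ already used in the proof of Lemma \ref{maintheorem5}, one obtains a uniform bound $|\langle\psi_{\lambda,\mu}^{'}(u_{n}^{\pm},v_{n}^{\pm}),(u_{n}^{\pm},v_{n}^{\pm})\rangle|\geq c>0$. Hence $\theta_{n}\to 0$, and since $\psi_{\lambda,\mu}^{'}(u_{n}^{\pm},v_{n}^{\pm})$ stays bounded in $W^{'}$ (the sequence being bounded in $W$), we conclude $J_{\lambda,\mu}^{'}(u_{n}^{\pm},v_{n}^{\pm})=\theta_{n}\,\psi_{\lambda,\mu}^{'}(u_{n}^{\pm},v_{n}^{\pm})+o(1)=o(1)$ in $W^{'}$, as required. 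This scheme follows Wu \cite{9} and Brown--Zhang \cite{3}.
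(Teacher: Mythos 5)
Your argument is correct and is essentially the argument the paper relies on: the paper gives no proof of its own here, deferring entirely to Wu [\cite{7}, Proposition 9], and that proposition is established by exactly the scheme you describe --- Ekeland's variational principle on $\overline{M_{\lambda,\mu}^{\pm}}$ (legitimate by the coercivity and lower boundedness of Lemma \ref{maintheorem8}), followed by the fibering/implicit-function perturbation and the elimination of the Lagrange multiplier via the nondegeneracy of $\langle\psi_{\lambda,\mu}^{'}(u,v),(u,v)\rangle$ on $M_{\lambda,\mu}^{\pm}$. The one step in your sketch that still requires real work is the uniform lower bound $|\langle\psi_{\lambda,\mu}^{'}(u_{n}^{\pm},v_{n}^{\pm}),(u_{n}^{\pm},v_{n}^{\pm})\rangle|\geq c>0$, but you have correctly identified both the difficulty and the ingredients needed to close it, namely the two-sided norm estimates appearing in the proof of Lemma \ref{maintheorem5}.
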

\begin{proof}[\bf Proof] The proof is almost the same as that in
Wu[\cite{7}. Proposition 9].
 \end{proof}

%------------------------------------------------------------------------------------------%

\section{\noindent\bf  Proof of Theorem 1.1}
 Throughout this section, we
assume that the parameters $\lambda$ and $\mu$ satisfies
$0<|\lambda|<\lambda_{0}$ and $0<|\mu|<\mu_{0}$. Then we have the
following results.
\begin{theorem}\label{maintheorem1} System $(E_{\lambda,\mu})$ has
a positive solution $(u_{0}^{+},v_{0}^{+})\in
M_{\lambda,\mu}^{+}$ such that
$J_{\lambda,\mu}(u_{0}^{+},v_{0}^{+})=\alpha_{\lambda,\mu}^{+}<0$.
\end{theorem}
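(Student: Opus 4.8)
The plan is to realize the infimum $\alpha_{\lambda,\mu}^{+}$ over $M_{\lambda,\mu}^{+}$ as the energy of an actual critical point, using the minimizing sequence supplied by Lemma~\ref{maintheorem}. First I would invoke Lemma~\ref{maintheorem} to extract a sequence $\{(u_n,v_n)\}\subset M_{\lambda,\mu}^{+}$ with $J_{\lambda,\mu}(u_n,v_n)\to\alpha_{\lambda,\mu}^{+}$ and $J'_{\lambda,\mu}(u_n,v_n)\to 0$ in $W'$. Since $J_{\lambda,\mu}$ is coercive on $M_{\lambda,\mu}$ by Lemma~\ref{maintheorem8}, the sequence is bounded in $W$, so after passing to a subsequence I obtain $(u_n,v_n)\rightharpoonup(u_0,v_0)$ weakly in $W$, strongly in $L^q\times L^q$ and in $L^{r+s}$ (by the compact Sobolev embedding, using $q<p$ and $r+s<p^\ast$), and pointwise a.e.

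Next I would show the weak limit is nontrivial and in fact lies in $M_{\lambda,\mu}^{+}$ with the correct energy. The key quantitative input is that $\alpha_{\lambda,\mu}^{+}<0$: I would first establish this by exhibiting a point of $M_{\lambda,\mu}^{+}$ at which $J_{\lambda,\mu}$ is negative, which follows from the fibering-map analysis, since on $M_{\lambda,\mu}^{+}$ the formula for $J_{\lambda,\mu}$ (analogous to (2.2)) combined with Lemma~\ref{maintheorem3}(i) forces the sign. The strict negativity rules out $(u_0,v_0)=(0,0)$: if the limit were trivial, the $L^q$ and $L^{r+s}$ integrals would vanish and the energy would tend to a nonnegative value, contradicting $\alpha_{\lambda,\mu}^{+}<0$. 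With $(u_0,v_0)\neq(0,0)$ secured, I would pass to the limit in the constraint (1) and in the derivative to conclude $J'_{\lambda,\mu}(u_0,v_0)=0$, so $(u_0,v_0)\in M_{\lambda,\mu}$; because $M_{\lambda,\mu}^{0}=\emptyset$ for our parameter range (Lemma~\ref{maintheorem5}), it sits in $M_{\lambda,\mu}^{+}\cup M_{\lambda,\mu}^{-}$, and the sign of $\langle\psi'_{\lambda,\mu},(u_0,v_0)\rangle$ together with the energy value pins it to $M_{\lambda,\mu}^{+}$.

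The main obstacle is upgrading weak convergence to strong convergence in $W$, i.e.\ showing $\|(u_n,v_n)\|\to\|(u_0,v_0)\|$, so that $J_{\lambda,\mu}(u_0,v_0)=\alpha_{\lambda,\mu}^{+}$ is attained rather than only bounded by weak lower semicontinuity. The standard device is to test the Palais--Smale condition: the compactness of the lower-order terms (the $L^q$ and $L^{r+s}$ integrals converge by the compact embedding) lets me write $o(1)=\langle J'_{\lambda,\mu}(u_n,v_n),(u_n,v_n)-(u_0,v_0)\rangle$ and deduce that the leading gradient term converges, forcing $\|(u_n,v_n)\|\to\|(u_0,v_0)\|$ and hence strong convergence by uniform convexity of $W$. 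This strong convergence simultaneously delivers $J_{\lambda,\mu}(u_0,v_0)=\alpha_{\lambda,\mu}^{+}$ and guarantees $(u_0,v_0)\in M_{\lambda,\mu}^{+}$ rather than on its boundary.

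Finally I would promote the minimizer to a positive solution. By Lemma~\ref{maintheorem0}, since $(u_0,v_0)$ is a local minimizer of $J_{\lambda,\mu}$ on $M_{\lambda,\mu}$ and lies outside $M_{\lambda,\mu}^{0}$, it is a genuine critical point, hence a weak solution of $(E_{\lambda,\mu})$. Replacing $(u_0,v_0)$ by $(|u_0|,|v_0|)$ leaves the energy and the constraint unchanged, so I may assume $u_0,v_0\geq0$; standard elliptic regularity makes the solution classical, and the Harnack inequality (as in the argument cited for $(E_\lambda)$ via Trudinger) yields strict positivity. This produces the desired $(u_0^+,v_0^+)\in M_{\lambda,\mu}^{+}$ with $J_{\lambda,\mu}(u_0^+,v_0^+)=\alpha_{\lambda,\mu}^{+}<0$.
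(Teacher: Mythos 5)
Your proposal follows essentially the same route as the paper: extract the minimizing Palais--Smale sequence from Lemma~\ref{maintheorem}, use coercivity (Lemma~\ref{maintheorem8}) and Rellich--Kondrachov to pass to a weak limit, rule out the trivial limit by playing the limit energy against $\alpha_{\lambda,\mu}^{+}<0$, and finish with Lemma~\ref{maintheorem0}, the replacement of $(u_{0},v_{0})$ by $(|u_{0}|,|v_{0}|)$, and Trudinger's Harnack inequality. The only real difference is in how the energy identity is obtained: you upgrade to strong convergence in $W$ via uniform convexity, whereas the paper avoids norm convergence altogether by rewriting $J_{\lambda,\mu}$ on the Nehari manifold purely in terms of the compactly convergent lower-order integrals $\lambda\int_{\Omega}f|u|^{q}+\mu\int_{\Omega}g|v|^{q}$ and $\int_{\Omega}h|u|^{r}|v|^{s}$; both devices are standard and valid here.
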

\begin{proof}[\bf Proof]
 First, we show $\alpha_{\lambda,\mu}^{+}<0$. For $(u,v)\in
M_{\lambda,\mu}^{+}$, we have
$$\begin{array}{rcl}
J_{\lambda,\mu}(u,v)&=&(\frac{1}{p}-\frac{1}{q})||(u,v)||^{p}\\
&+&(\frac{1}{q}-\frac{1}{r+s})(\int_{\Omega}h|u|^{r}|v|^{s})\\
&<&-\frac{(p-q)(r+s-p)}{pq(r+s)}||(u,v)||^{p}<0.\\
\end{array}$$
This implies $\alpha_{\lambda,\mu}^{+}<0$. By lemma \ref{maintheorem},
there exists $\{(u_{n}^{+},v_{n}^{+})\}\subset
M_{\lambda,\mu}^{+}$ such that
$J_{\lambda,\mu}(u_{n}^{+},v_{n}^{+})=\alpha_{\lambda,\mu}^{+}+0(1)$
and $J_{\lambda,\mu}^{'}(u_{n}^{+},v_{n}^{+})=o(1)$ in $W^{'}$.
Then by lemma \label{maintheorem4} (ii) and the Rellich-Kondrachov theorem there
exist a subsequence $\{(u_{n}^{+},v_{n}^{+})\}$ and
$(u_{0}^{+},v_{0}^{+})\in W$ is a solution of system
$(E_{\lambda,\mu})$ such that
 $(u_{n}^{+},v_{n}^{+})\rightarrow(u_{0}^{+},v_{0}^{+})$ weakly in $W$ and
$(u_{n}^{+},v_{n}^{+})\rightarrow (u_{0}^{+},v_{0}^{+})$ strongly
in $L$ for all $1\leq l<p^{*}$.\\
Then we have
$$\int_{\Omega}f|u_{n}^{+}|^{q}+\int_{\Omega}g|v_{n}^{+}|^{q}=\int_{\Omega}f|u_{0}^{+}|^{q}+\int_{\Omega}g|v_{0}^{+}|^{q}+o(1)$$
and
$(\lambda\int_{\Omega}f|u_{0}^{+}|^{q}+\mu\int_{\Omega}g|v_{0}^{+}|^{q})\geq
0$.\\
Now, we prove that
$(\lambda\int_{\Omega}f|u_{0}^{+}|^{q}+\mu\int_{\Omega}g|v_{0}^{+}|^{q})>
0$ otherwise, then
$$||(u_{n}^{+},v_{n}^{+})||^{p}=\int_{\Omega}h|u_{n}^{+}|^{r}|v_{n}^{+}|^{s}+o(1)$$and
$$\begin{array}{rcl}
(\frac{1}{p}-\frac{1}{r+s})||(u_{n}^{+},v_{n}^{+})||^{p}_{W}&=&\frac{1}{p}||(u_{n}^{+},v_{n}^{+})||^{p}\\
&-&\frac{1}{q}(\lambda\int_{\Omega}f|u_{n}^{+}|^{q}+\mu\int_{\Omega}g|v_{n}^{+}|^{q})\\
&-&\frac{1}{r+s}\int_{\Omega}h|u_{n}^{+}|^{r}|v_{n}^{+}|^{s}+o(1)\\
&=&\alpha_{\lambda,\mu}^{+}+o(l)\\
\end{array}$$
This is contradicts $\alpha_{\lambda,\mu}^{+}<0$. Thus,
$(\lambda\int_{\Omega}f|u_{0}^{+}|^{q}+\mu\int_{\Omega}g|v_{0}^{+}|^{q})>
0$. In particular, $(u_{0}^{+},v_{0}^{+})\in M_{\lambda,\mu}^{+}$
is a nontrivial solution of system $(E_{\lambda,\mu})$ and
$J_{\lambda,\mu}(u_{0}^{+},v_{0}^{+})\geq\alpha_{\lambda,\mu}^{+}.$
Moreover,
$$\begin{array}{rcl}
\alpha_{\lambda,\mu}^{+}&\leq&J_{\lambda,\mu}(u_{0}^{+},v_{0}^{+})=(\frac{1}{p}-\frac{1}{q})(\lambda\int_{\Omega}f|u_{0}^{+}|^{q}+\mu\int_{\Omega}g|v_{0}^{+}|^{q})\\
&+&(\frac{1}{p}-\frac{1}{r+s})\int_{\Omega}h|u_{0}^{+}|^{r}|v_{0}^{+}|^{s}\\
&=&\displaystyle\lim_{n\rightarrow\infty}J_{\lambda,\mu}(u_{n}^{+},v_{n}^{+})=\alpha_{\lambda,\mu}^{+}.\\
\end{array}$$
Consequently,
$J_{\lambda,\mu}(u_{0}^{+},v_{0}^{+})=\alpha_{\lambda,\mu}^{+}$.
Since
$J_{\lambda,\mu}(u_{0}^{+},v_{0}^{+})=J_{\lambda,\mu}(|u_{0}^{+}|,|v_{0}^{+}|)$
and $(|u_{0}^{+}|,|v_{0}^{+}|)\in M_{\lambda,\mu}^{+}$. By lemma
\ref{maintheorem0} we may assume that $u_{0}^{+}\geq0,~v_{0}^{+}\geq0$.
Moreover, by the Harnack inequality due to Trudinger \cite{6}, we
obtain $(u_{0}^{+},v_{0}^{+})$ is a positive solution of system
$(E_{\lambda,\mu})$.
\end{proof}
\begin{theorem}\label{maintheorem2} System $(E_{\lambda,\mu})$ has a positive solution
$(u_{0}^{-},v_{0}^{-})\in M_{\lambda,\mu}^{-}$ such that
$J_{\lambda,\mu}(u_{0}^{-},v_{0}^{-})=\alpha_{\lambda,\mu}^{-}$.
\end{theorem}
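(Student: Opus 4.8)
The plan is to mirror the argument for Theorem \ref{maintheorem1}, but since one cannot expect $\alpha_{\lambda,\mu}^{-}<0$, the extraction of a convergent minimizing sequence must be handled by a genuine compactness argument rather than by exploiting the sign of the energy. First I would invoke Lemma \ref{maintheorem} to produce a minimizing sequence $\{(u_{n}^{-},v_{n}^{-})\}\subset M_{\lambda,\mu}^{-}$ with $J_{\lambda,\mu}(u_{n}^{-},v_{n}^{-})=\alpha_{\lambda,\mu}^{-}+o(1)$ and $J_{\lambda,\mu}^{'}(u_{n}^{-},v_{n}^{-})=o(1)$ in $W^{'}$. By the coercivity of $J_{\lambda,\mu}$ on $M_{\lambda,\mu}$ (Lemma \ref{maintheorem8}) this sequence is bounded in $W$, so after passing to a subsequence I may assume $(u_{n}^{-},v_{n}^{-})\rightharpoonup(u_{0}^{-},v_{0}^{-})$ weakly in $W$ and, by the Rellich--Kondrachov theorem, $(u_{n}^{-},v_{n}^{-})\to(u_{0}^{-},v_{0}^{-})$ strongly in $L^{l}$ for every $1\le l<p^{*}$.

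The crucial point is to show that the weak limit is nontrivial and actually lies in $M_{\lambda,\mu}^{-}$. For this I would first record that elements of $M_{\lambda,\mu}^{-}$ are bounded away from the origin: combining the defining inequality $\langle\psi_{\lambda,\mu}^{'}(u,v),(u,v)\rangle<0$ with the identities (1)--(2) gives $\int_{\Omega}h|u|^{r}|v|^{s}>\frac{p-q}{r+s-q}\|(u,v)\|^{p}$, and inserting the Sobolev inequality $\int_{\Omega}h|u|^{r}|v|^{s}\le\|h\|_{\infty}S_{r+s}^{-\frac{r+s}{p}}\|(u,v)\|^{r+s}$ forces a lower bound $\|(u,v)\|\ge c_{1}>0$ on $M_{\lambda,\mu}^{-}$, hence a constant $c_{0}>0$ with $\int_{\Omega}h|u|^{r}|v|^{s}\ge c_{0}$ for all $(u,v)\in M_{\lambda,\mu}^{-}$. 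Since $(u,v)\mapsto\int_{\Omega}h|u|^{r}|v|^{s}$ is continuous for the strong $L^{r+s}$ topology, passing to the limit gives $\int_{\Omega}h|u_{0}^{-}|^{r}|v_{0}^{-}|^{s}\ge c_{0}>0$; in particular $(u_{0}^{-},v_{0}^{-})\neq(0,0)$ and the nonlocal term does not vanish.

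Next I would use the fibering map $\phi_{u,v}$ discussed in the remarks of Section 2: because $\int_{\Omega}h|u_{0}^{-}|^{r}|v_{0}^{-}|^{s}>0$, the map $\phi_{u_{0}^{-},v_{0}^{-}}$ possesses a unique local-maximum scaling $t^{-}>0$ with $(t^{-}u_{0}^{-},t^{-}v_{0}^{-})\in M_{\lambda,\mu}^{-}$. To identify $t^{-}=1$ and upgrade the weak convergence to strong convergence I would argue by contradiction: if $(u_{n}^{-},v_{n}^{-})\not\to(u_{0}^{-},v_{0}^{-})$ in $W$, then $\|(u_{0}^{-},v_{0}^{-})\|<\liminf_{n}\|(u_{n}^{-},v_{n}^{-})\|$ by weak lower semicontinuity, and evaluating $J_{\lambda,\mu}$ at $(t^{-}u_{0}^{-},t^{-}v_{0}^{-})$ and comparing with the limiting value would yield $J_{\lambda,\mu}(t^{-}u_{0}^{-},t^{-}v_{0}^{-})<\alpha_{\lambda,\mu}^{-}$, contradicting the definition of $\alpha_{\lambda,\mu}^{-}$ as the infimum over $M_{\lambda,\mu}^{-}$. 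Hence $(u_{n}^{-},v_{n}^{-})\to(u_{0}^{-},v_{0}^{-})$ strongly in $W$, so $(u_{0}^{-},v_{0}^{-})\in M_{\lambda,\mu}^{-}$ and $J_{\lambda,\mu}(u_{0}^{-},v_{0}^{-})=\alpha_{\lambda,\mu}^{-}$.

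Finally, since $M_{\lambda,\mu}^{0}=\emptyset$ for $0<|\lambda|<\lambda_{0}$, $0<|\mu|<\mu_{0}$ (Lemma \ref{maintheorem5}), the minimizer is not in $M_{\lambda,\mu}^{0}$, so by Lemma \ref{maintheorem0} it is a critical point of $J_{\lambda,\mu}$, i.e. a weak solution of $(E_{\lambda,\mu})$. As in Theorem \ref{maintheorem1}, replacing the components by their absolute values preserves both the energy and membership in $M_{\lambda,\mu}^{-}$, so I may take $u_{0}^{-}\ge0$ and $v_{0}^{-}\ge0$; standard elliptic regularity together with the Harnack inequality of Trudinger then upgrades this to a positive solution. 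The main obstacle is the third step, securing strong convergence in $W$: because $M_{\lambda,\mu}^{-}$ is not weakly closed and $\alpha_{\lambda,\mu}^{-}$ carries no sign information, the uniform lower bound on $\int_{\Omega}h|u|^{r}|v|^{s}$ over $M_{\lambda,\mu}^{-}$ and the uniqueness of the projection $t^{-}$ onto $M_{\lambda,\mu}^{-}$ are exactly what restore compactness.
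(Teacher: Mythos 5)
Your proposal follows essentially the same route as the paper's proof of this theorem: a Palais--Smale minimizing sequence in $M_{\lambda,\mu}^{-}$ from Lemma \ref{maintheorem}, boundedness via the coercivity of Lemma \ref{maintheorem8}, weak convergence in $W$ together with Rellich--Kondrachov, identification of the weak limit as a minimizer, and finally the passage to absolute values and Trudinger's Harnack inequality to obtain a positive solution. If anything, your version is more complete than the paper's: the paper simply asserts that the weak limit is a nonzero element of $M_{\lambda,\mu}^{-}$, whereas you supply the uniform lower bound $\int_{\Omega}h|u|^{r}|v|^{s}\ge c_{0}>0$ on $M_{\lambda,\mu}^{-}$ and the fibering-map projection argument for strong convergence that actually justify this key step.
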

\begin{proof}[\bf Proof] By lemma \ref{maintheorem}, there exists $\{(u_{n},v_{n})\}\subset
M_{\lambda,\mu}^{-}$ such that\\
$J_{\lambda,\mu}(u_{n}^{-},v_{n}^{-})=\alpha_{\lambda,\mu}^{-}+o(1)$
and $J_{\lambda,\mu}^{'}(u_{n}^{-},v_{n}^{-})=o(1)$ in $W^{'}$.\\
By lemma \ref{maintheorem8} and the Relich-Kondrachov  theorem, there exist a
subsequence $\{(u_{n}^{-},v_{n}^{-})\}$ and
$(u_{0}^{-},v_{0}^{-})\in M_{\lambda,\mu}^{-}$ is a nonzero
solution of system $(E_{\lambda,\mu})$ such that
 $(u_{n}^{-},v_{n}^{-})\rightarrow(u_{0}^{-},v_{0}^{-})$ weakly in $W$ and
$(u_{n}^{-},v_{n}^{-})\rightarrow (u_{0}^{-},v_{0}^{-})$ strongly
in $L$. Moreover,
$$\begin{array}{rcl}
\alpha_{\lambda,\mu}^{-}&\leq&J_{\lambda,\mu}(u_{0}^{-},v_{0}^{-})=(\frac{1}{p}-\frac{1}{q})(\lambda\int_{\Omega}f|u_{0}^{-}|^{q}+\mu\int_{\Omega}g|v_{0}^{-}|^{q})\\
&+&(\frac{1}{p}-\frac{1}{r+s})\int_{\Omega}h|u_{0}^{-}|^{r}|v_{0}^{-}|^{s}\\
&=&\displaystyle\lim_{n\rightarrow\infty}J_{\lambda,\mu}(u_{n}^{-},v_{n}^{-})=\alpha_{\lambda,\mu}^{-}.\\
\end{array}$$
Consequently,
$J_{\lambda,\mu}(u_{0}^{-},v_{0}^{-})=\alpha_{\lambda,\mu}^{-}$.
Since
$J_{\lambda,\mu}(u_{0}^{-},v_{0}^{-})=J_{\lambda,\mu}(|u_{0}^{-}|,|v_{0}^{-}|)$
and $(|u_{0}^{-}|,|v_{0}^{-}|)\in M_{\lambda,\mu}^{-}$. By lemma \ref{maintheorem0}
 we may assume that $u_{0}^{-}\geq0,~v_{0}^{-}\geq0$.\\
Moreover, by the Haranack inequality due to Trudinger \cite{6}, we
obtain $(u_{0}^{-},v_{0}^{-})$ is a positive solution of system
$(E_{\lambda,\mu})$.\\
\end{proof}

Now, we begin to show the proof of {\bf theorem 1.1}: By theorem
\ref{maintheorem1}, \ref{maintheorem2} system $(E_{\lambda,\mu})$ has two positive solutions
$(u_{0}^{+},v_{0}^{+})$ and $(u_{0}^{-},v_{0}^{-})$ such that
$(u_{0}^{+},v_{0}^{+})\in M_{\lambda,\mu}^{+}$ and
$(u_{0}^{-},v_{0}^{-})\in M_{\lambda,\mu}^{-}$. Since
$M_{\lambda,\mu}^{+}\cap M_{\lambda,\mu}^{-}=\emptyset$, this
implies that $(u_{0}^{+},v_{0}^{+})$ and $(u_{0}^{-},v_{0}^{-})$
are distinct.\\\\

%----------------------------------------------------------------------------------------%


\begin{thebibliography}{9}
\bibitem{1} A. Ambrosetti, H. Brezis and G. Cerami, Combined
effects of concave and convex nonlinearities in some elliptic
problems, J. Funct. Anal. 122, 519-543, 1994.
\bibitem{2} K. J. Brown and T. F. Wu, A fibrering map approach to a
semilinear elliptic boundary value problem, J. Diff. Equns 69, 1-9, 2007.
\bibitem{3} K. J. Brown and Y. Zhang, The Nehari manifold for a
semilinear elliptic equation with a sign- changing weight
function, J. Diff. Equns 193, 481-499, 2003.
\bibitem{4} I. Ekeland, On the variational principle, J. Math. Anal.
Appl. 17, 324-3531947.
\bibitem{5} D. G. de Figueiredo, J. P. Gossez and P. Ubilla, Local
superlinearity and sublinearity for indefinite semilinear
elliptic problems, J. Funct. Anal. 199, 452-467, 2003.
\bibitem{6} N. S. Trudinger, On Harnack type inequalities and their
application to quasilinear elliptic equations, Comm. Pure Appl.
Math. 20, 721-747, 1967.
\bibitem{7} T. F. Wu, On semilinear elliptic equations involving
concave-convex nonlinearities ans sign-changing weight function,
J. Math. Anal. Appl. 318, 253-270, 2006.
\bibitem{8} T. F. Wu, Multiplicity results for a semilinear
elliptic equation involving sign-changing weight function, Rocky
Mountain Journal of Mathematics, 39, 995-1011, 2009.
\bibitem{9} T. F. Wu, Multiplicity of positive solutions of
p-Laplacian problems with sign-changing weight functions, J.
Math. Anal. Appl. 12, 557-563. 2007.
\bibitem{10} T. F. Wu, The Nehari manifold for a semilinear elliptic
system involving sign-changing weight functions, Nonlinear Anal.
68, 1733-1745, 2007.\\\\


Seyyed Sadegh Kazemipoor\\
Department of Mathematical Analysis, Charles University, Prague, Czech Republic.\\
E-mail address: S.kazemipoor@umz.ac.ir\\

Hadiseh Ebrahimi\\
Rasam University, Karaj, Iran.\\
E-mail address: ebrahimihadis124@gmail.com\\\\


\end{thebibliography}
\end{document}